\begin{document}
\makeatletter
\def\@begintheorem#1#2{\trivlist \item[\hskip \labelsep{\bf #2\ #1.}] \it}
\def\@opargbegintheorem#1#2#3{\trivlist \item[\hskip \labelsep{\bf #2\ #1}\ {\rm (#3).}]\it}
\makeatother
\newtheorem{thm}{Theorem}[section]
\newtheorem{alg}[thm]{Algorithm}
\newtheorem{conj}[thm]{Conjecture}
\newtheorem{lemma}[thm]{Lemma}
\newtheorem{defn}[thm]{Definition}
\newtheorem{cor}[thm]{Corollary}
\newtheorem{exam}[thm]{Example}
\newtheorem{prop}[thm]{Proposition}
\newenvironment{proof}{{\sc Proof}.}{\rule{3mm}{3mm}}

\title{Honeycomb Toroidal Graphs}
\author{Brian Alspach\\School of Mathematical and Physical Sciences\\
University of Newcastle\\Callaghan, NSW 2308\\
Australia\\brian.alspach@newcastle.edu.au}
\date{}

\maketitle

\begin{abstract} Honeycomb toroidal graphs are trivalent Cayley graphs on generalized dihedral
groups.  We examine the two historical threads leading to these graphs, some of the properties
that have been established, and some open problems.
\end{abstract}

\medskip

\noindent{\bf Keywords}: honeycomb toroidal graph, Cayley graph, hexagonal network,
Hamilton-laceable.

\smallskip

\noindent{\bf AMS Classification}: 05C25, 05C38

\section{Introduction}

This paper discusses a family of graphs called {\it honeycomb toroidal graphs}.  They have arisen in
two distinct settings which are discussed in the next two sections.  This is followed by an examination
of some of their properties and some parameters of interest.  Several open problems also are
mentioned.

Given the disparate subject areas employing graphs as models, there are a variety of concepts
for which different terms are used across disciplines.  Thus, we shall mention some terminolgy used
in this paper.  A {\it graph} has neither loops nor multiple edges.  The {\it valency} of a vertex $v$,
denoted $\mathrm{val}(v)$, is the number of edges incident with $u$.  The {\it order} of a graph
is the cardinality of its vertex set and the {\it size} of a graph is the cardinality of its edge set.

\begin{defn}\label{cay}{\em Let $G$ be a finite group and $S\subset G$ satisfying $1\not\in S$
and $s\in S$ if and only if $s^{-1}\in S$.  The} Cayley graph on $G$ with connection set $S$,
{\em denoted $\mathrm{Cay}(G;S)$, has the elements of $G$ as its vertex set and has an edge
joining $g$ and $h$ if and only if $h=gs$ for some $s\in S$.}
\end{defn}

A {\it path of length} $\ell$ in a graph is a subgraph consisting of a sequence $v_0,v_1,\ldots,v_{\ell}$
of $\ell+1$ distinct vertices such that the edge $v_iv_{i+1}$ belongs to the path for
$i=0,1,\ldots,\ell-1$.  A {\it cycle of length}
$\ell$ is a connected subgraph of size $\ell$ in which every vertex has valency 2.  Cycles are denoted
by a sequence of vertices as they occur along the cycle with the convention that the first vertex
and the last vertex of the sequence are the same in order to distinguish it from a path.
A {\it Hamilton cycle} in a graph is a cycle containing every vertex of the graph, and a {\it Hamilton
path} is a path containing every vertex.

\section{Algebraic And Topological Viewpoint}

Altshuler \cite{A1} considered three families of equivelar maps on the torus and was able to show that
every graph in two of the families possesses a Hamilton cycle, but was unable to do so for the other
family.  The latter family consists of the equivelar maps with Schl\"afli type $(6,3)$, that is, the boundaries
of all the faces are 6-cycles and vertices all have valency 3.  Many of these graphs, but not all, are Cayley
graphs on the appropriate dihedral group.  So this problem arising in topological graph theory impinges
on another problem which has drawn considerable attention for fifty years, namely, does every
connected Cayley graph of order at least three have a Hamilton cycle?

The answer to the preceding question for Cayley graphs on abelian groups was known to be yes
as early as the first edition of Lov\'asz's book entitled {\sl Combinatorial Problems and Exercises}
\cite{L1}.  However, a much stronger result by Chen and Quimpo \cite{C2} appeared in 1981.
Their theorem follows two definitions.  A graph $X$ is {\it Hamilton-connected} if for every pair
of vertices $u$ and $v$ in $X$ there is a Hamilton path whose terminal vertices are $u$ and $v$.
A bipartite graph $X$ is {\it Hamilton-laceable} if the same property holds for any two vertices in
opposite parts.

\begin{thm} If $X$ is a connected Cayley graph of valency at least 3 on an abelian group, then
$X$ is Hamilton-connected unless it is bipartite in which case it is Hamilton-laceable.
\end{thm}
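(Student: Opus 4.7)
The plan is to prove both assertions simultaneously by induction on $|G|$, exploiting the structure theorem for finite abelian groups to reduce to smaller Cayley graphs via cyclic quotients. The base cases are the finitely many connected trivalent Cayley graphs on abelian groups of small order (say $|G|\le 8$) for which no useful reduction is available, including $K_4$ (non-bipartite, Hamilton-connected) and the $3$-cube $Q_3$ (bipartite, Hamilton-laceable); these are dispatched by direct case analysis.

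For the inductive step, given a connected $X=\mathrm{Cay}(G;S)$ of valency at least $3$, I would choose a generator $s\in S$, form the cyclic subgroup $H=\langle s\rangle$ of order $m$, and consider the quotient Cayley graph $\overline{X}=\mathrm{Cay}(G/H;\overline{S})$, where $\overline{S}=\{tH:t\in S\setminus H\}$. Normality of $H$ is automatic since $G$ is abelian. The cosets of $H$ partition $V(X)$, and $\{s,s^{-1}\}$ induces inside each coset either an $m$-cycle (if $m\ge 3$) or a matching edge (if $m=2$). The plan is to choose $s$ so that $\overline{X}$ is connected of valency at least $3$ and inherits the bipartite or non-bipartite status in a controllable way; the inductive hypothesis then furnishes Hamilton paths, or (in the bipartite-laceable case) Hamilton cycles, between any desired pair of cosets.

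To build a Hamilton $u$--$v$ path in $X$ when $uH\ne vH$, take a Hamilton path $uH=C_0,C_1,\ldots,C_k=vH$ in $\overline{X}$ and lift it: at each coset $C_i$ traverse all $m$ vertices between a prescribed entry and exit determined by the labels $t_{i-1},t_i\in \overline{S}$ of the flanking edges. When $m\ge 3$, each coset cycle offers two directions of traversal, supplying enough flexibility to propagate consistent choices and land at $v$. When $u$ and $v$ lie in the same coset, lift instead a Hamilton cycle of $\overline{X}$ through $uH$, splitting the $m$-cycle on $uH$ into two arcs ending at $u$ and $v$ and arranging the exterior traversal so that the return enters $uH$ at the appropriate neighbour of $v$.

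The main obstacle is the case $m=2$, when the coset ``cycles'' collapse to single edges with rigidly forced entries and exits, so the parity of the resulting path may conflict with the required endpoints. Resolution requires either switching to a different generator---using valency $\ge 3$ to ensure alternatives exist---or locally rerouting the quotient path around obstructing cosets; elementary abelian $2$-groups, where every generator is an involution, require separate treatment and a different reduction. The bipartite case demands parity bookkeeping throughout: $X$ is bipartite precisely when $G$ has an index-two subgroup disjoint from $S$, Hamilton $u$--$v$ paths can exist only between opposite parts, and the lifting must be arranged to yield a path of the correct length parity, which couples naturally to the laceability hypothesis applied to $\overline{X}$.
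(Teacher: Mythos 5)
First, note that the paper does not prove this theorem at all: it is quoted as the Chen--Quimpo theorem with a citation, so there is no in-paper argument to compare against. Judged on its own terms, your sketch has two genuine gaps. The first is that the inductive hypothesis is usually inapplicable: if $X$ has valency $3$ or $4$, then $S\setminus H$ contains at most one inverse pair, so $\overline{S}$ has at most two elements and $\overline{X}$ is a cycle (or worse), which is neither of valency at least $3$ nor Hamilton-connected (a cycle $C_k$ with $k\ge 4$ has a Hamilton path between $u$ and $v$ only when they are adjacent). You say you will ``choose $s$ so that $\overline{X}$ is connected of valency at least $3$,'' but no such choice exists in the low-valency cases, and those are precisely the hard cases (and the ones relevant to this paper, whose graphs are trivalent). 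The difficult core of the theorem is exactly the situation where the quotient collapses to a cycle or an edge --- essentially Cayley graphs resembling $C_m\times C_k$ or prisms --- and your plan defers all of that to the base cases $|G|\le 8$, where it does not live: it recurs at every order.

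The second gap is in the lifting step. You propose that at each coset $C_i$ you ``traverse all $m$ vertices between a prescribed entry and exit.'' Inside a coset the only available edges are those of the $m$-cycle induced by $\{s,s^{-1}\}$, and an $m$-cycle has a Hamilton path between two of its vertices only when they are adjacent on that cycle. So the exit vertex is forced to be a neighbour of the entry vertex, which is far too rigid to hit an arbitrary target $v$; the two ``directions of traversal'' only let you pick which of the two neighbours of the entry point you leave from. The standard way around this is not coset-by-coset traversal but a weaving argument that passes back and forth between consecutive cosets many times (as in Hamilton-connectedness proofs for $P_k\times C_m$-like graphs), together with careful parity bookkeeping; none of that machinery is present in the sketch, and the acknowledged $m=2$ and elementary abelian $2$-group obstacles are left unresolved rather than handled. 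As it stands the proposal is a reasonable statement of intent, but the steps that carry the actual difficulty of the Chen--Quimpo theorem are missing.
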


Note that the preceding theorem implies that every edge of a connected Cayley graph on an
abelian group belongs to a Hamilton cycle.  If the valency is at least 3, it is implied by the
theorem.  If the valency is 2, the graph is a Hamilton cycle. 

The dihedral group is close to being abelian in the sense that the dihedral group $D_n$ of order
$2n$ contains an abelian subgroup of index 2, that is, has order $n$.  In fact, it still is not known
whether every connected Cayley graph on $D_n$ is hamiltonian in spite of the efforts of a
non-trivial number of people working on the problem for the last forty plus years.

As we shall see soon, when considering Cayley graphs on dihedral groups, those for which the
connection set consists of three reflections turn out to be crucial.  Let's now take a closer look at
these particular graphs.

Throughout this paper we let $D_n$ denote the dihedral group of degree $n$ and order $2n$.
We visualize the group as the symmetries of a regular $n$-gon.  So the group is generated by
an element $\rho$ of order $n$ (it rotates the $n$-gon cyclically) and a reflection $\tau$.
Thus, $|\tau|=2$ and $\tau\rho\tau=\rho^{-1}$.  The cyclic subgroup $\langle\rho\rangle$
has index 2 in $D_n$.  Note that the coset $\langle\rho\rangle\tau$ consists of $n$ reflections.
When $n$ is odd, the $n$ reflections are the only involutions in $D_n$, whereas, $\rho^{n/2}$
also is an involution when $n$ is even.

We are interested in Cayley graphs on $D_n$ whose connection sets consist of three reflections.
Let $S=\{\rho^i\tau,\rho^j\tau,\rho^k\tau\}$, where $0\leq i<j<k<n$.  It is clear that the
connection set $\{\tau,\rho^{j-i}\tau,\rho^{k-i}\tau\}$ produces an isomorphic Cayley graph.
Hence, we shall assume the connection set has the form $S=\{\tau,\rho^i\tau,\rho^j\tau\}$, where
$0<i<j<n$.

The graph $X=\mathrm{Cay}(D_n;S))$ is connected if and only if it is the case that $\mathrm{gcd}(n,i,j)=1$.  So
dealing with connectivity is straightforward.  The subgraph generated using just $\tau$ and
$\rho^i\tau$ consists of $m$ cycles of length $2n/m$, where $m=\mathrm{gcd}(n,i)$.  The case
in which we are most interested is when $m>1$ and $X$ is connected.  This means that the
element $\rho^j\tau$ generates edges that connect the $m$ cycles to form a single component
for $X$.  We want to take a careful look at these graphs to see how to represent them nicely.

The vertices of $\langle\rho\rangle$ are cyclically labelled $1,\rho,\rho^2,\ldots,\rho^{n-1}$
and those of $\langle\rho\rangle\tau$ are cyclically labelled $\tau,\rho\tau,\rho^2\tau,\ldots,
\rho^{n-1}\tau$.  The $m$ cycles have the properties that they have even length at least 4, and
the vertices alternate between belonging to $\langle\rho\rangle$ and $\langle\rho\rangle\tau$.
Let $\rho^j\tau$ generate an edge joining a vertex of $\langle\rho\rangle$ in a cycle $C_1$ to a
vertex of $\langle\rho\rangle\tau$ in another cycle $C_2$. The distance (under the cyclic labellings)
to the next element of $\langle\rho\rangle$ along $C_1$ is the same as the distance to the next
element of $\langle\rho\rangle\tau$ along $C_2$.  Hence, these two vertices also are joined by
an edge generated by $\rho^j\tau$.  This is true for all the edges generated by $\rho^j\tau$
joining the cycles together.

Therefore, we may label the vertices of the graph as $u_{i,j}$, $0\leq i\leq m-1$ and
$0\leq j\leq n-1$ so that the following are the edges:
\begin{itemize}
\item $u_{i,j}u_{i,j+1}$ for $i=0,1,\ldots,m-1$ and $j=0,1,\ldots,n-1$, where the second
subscript is reduced modulo $n$ (these are called {\it vertical edges});
\item $u_{i,j}u_{i+1,j}$ for $i=0,1,\ldots,m-2$ and all $j$ such that $i+j$ is odd (these are
called {\it flat edges}; and
\item $u_{m-1,j}u_{0,j+\ell}$, where $m,j,\ell$ all have the same parity (these are called
{\it jump edges}.
\end{itemize}

As we label the columns from left to right, it is clear that we may assure that the edges
between successive columns are flat.  However, once the last column is labelled the only
feature we know about the edges from the last column back to the first column is that they
have the same change in the second coordinate, that is, they have the same jump.  We
now have a straightforward description of these graphs.  They are called honeycomb
toroidal graphs and are denoted $\mathrm{HTG}(m,n,\ell)$, where $m$ is the number of column
cycles, $n$ is the length of the column cycles so that $n\geq 4$ and is even, and $\ell$ is
the jump from the last column back to the first.  Figure 1 shows the 

\begin{picture}(200,260)(-120,-40)
\multiput(0,0)(20,0){2}{\multiput(0,0)(0,20){10}{\circle*{5}}}
\multiput(0,0)(20,0){2}{\line(0,1){180}}
\multiput(0,20)(0,40){5}{\line(1,0){20}}
\multiput(40,0)(0,20){10}{\circle*{5}}
\put(40,0){\line(0,1){180}}
\multiput(20,0)(0,40){5}{\line(1,0){20}}
\multiput(60,0)(0,20){10}{\circle*{5}}
\put(60,0){\line(0,1){180}}
\multiput(40,20)(0,40){5}{\line(1,0){20}}
\multiput(-10,-10)(80,0){2}{\line(0,1){200}}
\multiput(-10,-10)(0,200){2}{\line(1,0){80}}
\multiput(0,-10)(20,0){4}{\line(0,1){10}}
\multiput(0,180)(20,0){4}{\line(0,1){10}}
\multiput(-10,0)(0,40){5}{\line(1,0){10}}
\multiput(60,0)(0,40){4}{\line(1,4){10}}
\put(60,160){\line(1,4){7.5}}
\put(67.5,-10){\line(1,4){2.5}}
\multiput(43,-10)(0,200){2}{\vector(1,0){8}}
\multiput(-10,63)(80,0){2}{\vector(0,1){8}}
\put(-77,-35){{\sc Figure 1}: $\mathrm{HTG}(4,10,2)$ embedded on the torus}
\end{picture}

\medskip

\noindent honeycomb toroidal graph $\mathrm{HTG}(4,10,2)$ embedded on the torus.

Figure 1 demonstrates clearly how $\mathrm{HTG}(m,n,\ell)$ may be embedded on a torus for
any choice of the parameters.  Even though these graphs all have nice embeddings on the
torus, they are slightly misnamed in that they are not all toroidal graphs.  This turn of events
comes about because in order for a graph that embeds on the torus to be toroidal it must be
non-planar.  The graphs $\mathrm{HTG}(2,n,0)$, for all even $n\geq 4$, and $\mathrm{HTG}
(2,4,2)$ are, in fact, planar graphs.  All others are non-planar.  Nevertheless, we shall refer
to all graphs in the family as honeycomb toroidal graphs  

\section{Network Topology Viewpoint}

Network topology refers to methods used to connect objects together to perform certain tasks.
For example, connecting computers together to form a computer network or connecting processors
within a single computer fall within the area.  Some desirable properties are small valency so that the number of
direct connections is not too big and symmetry meaning that all the vertices are essentially the
same which allows local algorithms to be the same at each vertex.

One approach is to start with tesselations of the plane by regular polygons.  These have an
infinite number of vertices so that some modifications are required.  One such modification is
to bound a finite region of a tesselation with a ``nice polygon'' to obtain a finite graph.  The
latter graph is called a {\it mesh}.  The mesh is not regular but the addition of a few edges may
result in a graph that is not only regular but also is vertex-transitive.

The tesselation of the plane by regular hexagons is one source for which this was done.
Stojmenovic \cite{S1} suggested three bounding types of polygons to obtain meshes: a
hexagonal polygon, a square polygon and a rhombic polygon.  He then determined ways to
add edges so that all vertices have valency 3 and the resulting graph is vertex-transitive.

\bigskip

\includegraphics[scale=.5]{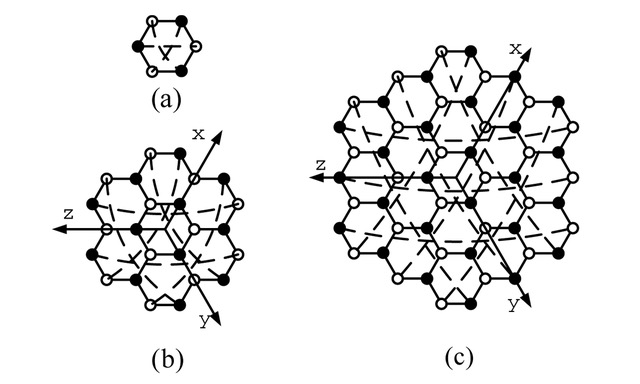}

\begin{center}
{\sc Figure 2}
\end{center}

\bigskip

Figure 2 shows the three smallest graphs obtained by Stojmenovic using a hexagonal bounding
polygon.  We need to examine the viewpoint in some detail because this formed the foundation
for the way subsequent researchers in the area developed the ideas.  He called the graph in
Figure 2(a) a {\it hexagonal torus of size 1}.  The graph in Figure 2(b) he called the hexagonal torus
of size 2.  Thus, to increase the size by 1 we add a ring of hexagons around the current graph.
This is a very geometric way of building the graphs.

His use of a square as a bounding polygon has since been extended to using a rectangle, and
his use of a rhombus for a bounding polynomial has been extended to using a parallelogram.
All three types of graphs share the property that they arise geometrically.  They are, in fact,
very special honeycomb toroidal graphs as domonstrated in the next result.

\begin{prop} The hexagonal torus of size $m$ is $\mathrm{HTG}(m,6m,3m)$ for $m\geq 1$.
The rectangular torus is $\mathrm{HTG}(m,n,0)$ for even $m\geq 2$.  The parallelogramic
torus is $\mathrm{HTG}(m,n,m')$, where $m'\equiv m(\mbox{mod }n)$ and $0\leq m'<m$.
\end{prop}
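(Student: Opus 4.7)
The plan is to take each of Stojmenovic's three geometrically defined tori and produce an explicit relabelling of its vertex set in the $u_{i,j}$ form of Section 2, verifying the vertical, flat and jump edges described there.  In all three cases the underlying mesh is the hexagonal tessellation of the plane, whose vertices partition naturally into vertical zig-zag columns joined by horizontal flat edges; what distinguishes the three constructions is only the shape of the bounding polygon, and hence how the boundary identification sends the last column back to the first.

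I would start with the rectangular torus, which is the most direct.  The vertical identification of the rectangle wraps each zig-zag column of length $n$ into a cycle, and the horizontal identification sends the last column straight back to the first with no vertical offset, so $\ell = 0$; the parity constraint that $m$ be even is forced by the alternating parity of flat edges between successive columns, which can close up consistently around the rectangle only when $m$ is even.  The parallelogramic torus is a variant of the same construction in which the opposite sides of the parallelogram are sheared relative to each other, so that traversing the $m$ columns from left to right along the slanted edge accumulates a vertical shift of $m$ units in the column coordinate, giving a jump congruent to $m$ modulo $n$.  In both cases the column cycles, the flat edges, and the bipartition are read off directly from the mesh.

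The main obstacle is the hexagonal torus of size $m$.  Here I would first have to show that the hexagonal bounding region produces $m$ column cycles each of length $6m$, accounting for the $6m^2$ vertices in the size-$m$ construction; this is most naturally done by induction on $m$ using Stojmenovic's recursive description, in which a size-$m$ graph is obtained from a size-$(m-1)$ graph by adding an outer ring of hexagons that increases the number of columns by one and the length of each column by six.  The delicate step is then showing that the jump equals exactly $3m$, i.e.\ half the column length.  For this I would fix a convenient vertex on the boundary of the hexagonal region, trace the column cycle through it, and use the central (\,$180$-degree rotational\,) symmetry of the bounding hexagon, together with its three pairs of identified sides, to verify that the image of the last column in the first column is the antipode of the starting vertex along the column cycle.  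That antipodal identification is precisely the half-period shift $\ell = 3m$, completing the matching.
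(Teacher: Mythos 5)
The paper states this proposition without proof --- it is offered as an immediate consequence of inspecting Stojmenovic's constructions --- so there is no argument of the author's to compare yours against. Judged on its own, your overall strategy (an explicit relabelling of each geometric construction into the $u_{i,j}$ coordinates of Section 2, checking vertical, flat and jump edges) is the right one, and your treatment of the rectangular and parallelogramic tori is essentially complete: $\ell=0$ is read off from the straight horizontal identification, the parity restriction on $m$ follows from the requirement that $m$, $j$ and $\ell$ share a parity for the jump edges to exist, and the shear of the parallelogram visibly contributes the offset $m \bmod n$.

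The hexagonal case, however, is left as a sketch at exactly the point where the content of the proposition lies. A hexagon-shaped region of $m$ rings contains $6m^2$ vertices arranged in roughly $2m$ vertical zig-zag strips of varying lengths in the planar mesh, not $m$ strips of uniform length $6m$; the claim that one obtains $m$ column cycles each of length $6m$ therefore depends on the side identifications concatenating several planar strips end to end into each column cycle. Your proposed induction step --- ``adding a ring increases the number of columns by one and the length of each column by six'' --- presupposes this clean column structure rather than deriving it, and the $180$-degree-symmetry argument for $\ell=3m$ is asserted rather than verified (one must actually trace a column cycle across the three pairs of identified sides to see that the return to column $0$ lands at the antipode). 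The vertex count $m\cdot 6m = 6m^2$ is consistent, but consistency of counts is not a proof that the identified strips form $m$ disjoint cycles with a uniform jump. To close the gap you would need to fix explicit coordinates on the hexagonal region, write down the three identifications, and exhibit the relabelling $u_{i,j}$ concretely --- which is the whole of the work for this case.
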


Some comments about terminology are in order.  Because of the way the network topology
approach developed these graphs, they were understandably viewed as special.  Thus, when
it was discovered \cite{C1} how to broaden the construction, the term {\it generalized honeycomb torus}
was adapted and appears in many papers.  However, we object to this terminology for two
reasons leading to the term honeycomb toroidal graph being used.  

The first objection is because the torus is a closed orientable surface of genus one and
even though these graphs have nice embeddings on the torus, the graphs themselves
should not be called tori.  The second objection arises because we have seen that the only
differences between them come from changing three descriptive parameters.  There is no
particular set of parameters that is special and the term `generalized' is inappropriate.  

\section{Hamiltonicity}

Hamiltonicity refers to various properties of graphs revolving around Hamilton paths and Hamilton cycles.
We consider two properties in this section.  The first is the hamiltonian property, that is, does $\mathrm{HTG}
(m,n,\ell)$ have a Hamilton cycle?  The second property is Hamilton laceability, that is, is every
honeycomb toroidal graph Hamilton-laceable? 

The answer to the first question is yes and was proven in \cite{Y2}.  We give a short proof
of this result but before doing so we discuss a useful constructive technique for honeycomb toroidal
graphs.

Consider a graph with vertex set $\{u_{i,j}:0\leq i\leq 2\mbox{ and }0\leq j\leq n\}$ and edges
$u_{0,t_1}u_{1,t_1};u_{0,t_2}u_{1,t_2};\ldots;u_{0,t_k}u_{1,t_k}$, where $0\leq t_1<t_2<
\cdots<t_k\leq n$.  So the graph consists of an $(n+1)\times 3$ array of vertices and some flat
edges between column 0 and column 1.  Extend each edge $u_{0,t_a}u_{1,t_a}$ to a path
from $u_{0,t_a}$ to $u_{2,t_a}$ by adding the vertical path from $u_{1,t_a}$ down to
$u_{1,1+t_{a-1}}$ followed by the edge $u_{1,1+t_{a-1}}u_{2,1+t_{a-1}}$ and then back
up column 2 to $u_{2,t_a}$.  We then obtain paths from $u_{0,t_a}$ to $u_{2,t_a}$ that use
all the vertices of columns 1 and 2.  This operation is called the {\it vertical downward fill} for
columns 1 and 2.  The {\it vertical upward fill} is defined in an obvious analogous manner.  
These operations are most clearly seen by looking at Figure 3 which shows an example of
both vertical fills and makes everything obvious. 

\begin{picture}(260,290)(0,-40)
\multiput(0,0)(30,0){3}{\multiput(0,0)(0,20){12}{\circle*{5}}}
\multiput(120,0)(30,0){3}{\multiput(0,0)(0,20){12}{\circle*{5}}}
\multiput(240,0)(30,0){3}{\multiput(0,0)(0,20){12}{\circle*{5}}}
\multiput(0,60)(120,0){3}{\line(1,0){30}}
\multiput(0,120)(120,0){3}{\line(1,0){30}}
\multiput(0,140)(120,0){3}{\line(1,0){30}}
\multiput(0,200)(120,0){3}{\line(1,0){30}}
\multiput(150,0)(30,0){2}{\line(0,1){60}}
\multiput(150,80)(30,0){2}{\line(0,1){40}}
\put(150,140){\line(1,0){30}}
\multiput(150,160)(30,0){2}{\line(0,1){40}}
\put(150,80){\line(1,0){30}}
\put(150,160){\line(1,0){30}}
\put(150,220){\line(1,0){30}}
\qbezier(150,0)(130,110)(150,220)
\qbezier(180,0)(200,110)(180,220)
\multiput(270,60)(30,0){2}{\line(0,1){40}}
\multiput(270,140)(30,0){2}{\line(0,1){40}}
\multiput(270,200)(30,0){2}{\line(0,1){20}}
\multiput(270,0)(30,0){2}{\line(0,1){40}}
\put(270,40){\line(1,0){30}}
\put(270,100){\line(1,0){30}}
\put(270,120){\line(1,0){30}}
\put(270,180){\line(1,0){30}}
\qbezier(270,0)(250,110)(270,220)
\qbezier(300,0)(320,110)(300,220)
\put(130,-30){\sc Figure 3}
\end{picture}

\begin{thm}\label{ham} Every honeycomb toroidal graph is hamiltonian.
\end{thm}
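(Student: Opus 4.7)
The plan is to construct an explicit Hamilton cycle via iterated applications of the vertical fill technique described just before the theorem. Begin by taking the set of all flat edges between columns $0$ and $1$; there are exactly $n/2$ of them, at the odd values of $j$, and each is viewed as a length-one path. Then apply vertical downward fills sequentially to the pairs of columns $(1,2), (3,4), \ldots$, each stage extending every existing path two columns further to the right. The compatibility of successive fills is automatic: after filling through column $2k$, each path endpoint in the rightmost column sits at an odd value of $j$, and the flat edges between columns $2k$ and $2k+1$ lie precisely at odd $j$, so the next fill is legal.

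When $m$ is odd, this procedure pairs the columns $1,2,\ldots,m-1$ perfectly. After $(m-1)/2$ fills I obtain $n/2$ vertex-disjoint paths running from $u_{0,2a-1}$ to $u_{m-1,2a-1}$, together covering every vertex of columns $1$ through $m-1$. When $m$ is even there is one leftover column, and I would handle that case by a symmetric construction that starts with the jump edges (which play the role of ``flat'' edges from column $m-1$ into column $0$, after absorbing the offset $\ell$ into a relabelling of the second coordinate) and then fills the pairs $(0,1),(2,3),\ldots,(m-2,m-1)$. In either parity the upshot is a family of $n/2$ internally-disjoint paths whose endpoints occupy a single parity class of $j$ in two distinguished columns, and whose union exhausts all vertices except for $n/2$ ``leftover'' vertices lying in one column that still must be threaded in.

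To close these paths into a Hamilton cycle I would splice them together using the jump edges and the remaining vertical edges of the distinguished column. After traversing a path and a jump edge, a single vertical step lands us at a new path endpoint, where the cycle resumes. Each such iteration shifts the index of the current endpoint by $\ell+1$ or $\ell-1$, depending on whether the vertical step is taken up or down, and a straightforward check shows that the jump destinations cover every leftover vertex exactly once because $j\mapsto j+\ell$ is a bijection between the two parity classes modulo $n$. The main obstacle is that the naive uniform choice --- always, say, the $+1$ step --- can fragment the closing walk into a disjoint union of $\gcd(\ell+1,n)/2$ cycles rather than yielding a single Hamilton cycle. Since $\ell+1$ and $\ell-1$ differ by $2$, they together generate the full relevant subgroup of $\mathbb{Z}_n$ that acts on the parity class of endpoints, so two adjacent cycles can always be merged by swapping a single $+1$ step for a $-1$ step at a suitable position. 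Iterating this local merge reduces the collection to a single Hamilton cycle, and verifying that the merging can be carried out uniformly in the parameters $(m,n,\ell)$ is the delicate heart of the argument.
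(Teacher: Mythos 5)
Your approach is genuinely different from the paper's (which inducts from $m$ to $m+2$ starting from the base cases $m=2$ and $m=3$, using Smith's theorem to manufacture a Hamilton cycle of $\mathrm{HTG}(1,n,\ell)$ containing a jump edge for the $m=3$ case), but it has a fatal gap in the closing step. Once you have filled columns $1$ through $m-1$ with the $n/2$ paths from $u_{0,j}$ to $u_{m-1,j}$ ($j$ odd), the completion is essentially forced: each $u_{m-1,j}$ is already saturated on one column-$(m-1)$ vertical edge by its path and its only remaining usable edge is its jump edge, so all $n/2$ jump edges must appear in the cycle; hence every even-indexed vertex $u_{0,k}$ uses its jump edge plus exactly one vertical edge, and every odd-indexed vertex $u_{0,j}$ uses its flat edge plus exactly one vertical edge. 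The chosen vertical edges therefore form a perfect matching of the column-$0$ cycle $u_{0,0}u_{0,1}\cdots u_{0,n-1}u_{0,0}$, and an even cycle has exactly two perfect matchings. So you cannot ``swap a single $+1$ step for a $-1$ step'': doing so leaves one odd vertex unmatched and another matched twice. The steps must be uniformly $+1$ or uniformly $-1$, yielding $\gcd(n,\ell+1)/2$ or $\gcd(n,\ell-1)/2$ cycles with no further freedom.

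Consequently the construction fails outright whenever both of these numbers exceed $1$. A concrete instance is $\mathrm{HTG}(3,30,11)$: here $\gcd(30,12)=6$ and $\gcd(30,10)=10$, so the two admissible completions give $3$ and $5$ disjoint cycles respectively, and no local merge is available. The deferred ``delicate heart of the argument'' is therefore not delicate but impossible within your framework; repairing it requires changing the path system itself (for example, starting from the flat edges of some Hamilton cycle of a smaller member of the family rather than from all $n/2$ flat edges, which is exactly what the paper's subdivide-and-fill induction does). The even-$m$ case as sketched inherits the same defect.
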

\begin{proof} Claim: If $\mathrm{HTG}(m,n,\ell)$, $m\geq 2$, is hamiltonian, then the graph
$\mathrm{HTG}(m+2,n,\ell)$ also is hamiltonian. It is easy to see that there must be at least one
flat edge between column 0 and column 1 in any Hamilton cycle.  So let $u_{0,j_1}u_{1,j_1},u_{0,j_2}u_{1,j_2},
\ldots,u_{0,j_t}u_{1,j_t}$, $0<j_1<j_2<\cdots<j_t<n$, be the flat edges between column 0
and column 1 in some Hamilton cycle.

Subdivide each flat edge with two new vertices in each edge.  Remove the central edge in each of the
subdivided edges and use vertical fills between the two new columns to obtain a Hamilton cycle
in $\mathrm{HTG}(m+2,n,\ell)$.

Thus, it suffices to prove that $\mathrm{HTG}(2,n,\ell)$ and $\mathrm{HTG}(3,n,\ell)$ are hamiltonian.
Consider $m=2$ first.  For each even $i$, let $P_i$ be the 4-path \[u_{0,i}u_{0,i+1}u_{1,i+1}u_{1,i}
u_{0,i+\ell}.\]  Start a path with $P_0$ followed by $P_{\ell}$ followed by $P_{2\ell}$ and so on.
This eventually closes off to form a cycle.  If the cycle is a Hamilton cycle, we are done.  If it is not
a Hamilton cycle, then perform vertical fills upwards on each flat edge (removing the flat edge)
to obtain a Hamilton cycle.

Now consider $\mathrm{HTG}(1,n,\ell)$.  The column itself is a Hamilton cycle that uses none of the
jump edges.  By Smith's Theorem \cite{T1} there is a second Hamilton cycle $C$ and it must use some jump
edges.  Each jump edge has the form $u_{0,i}u_{0,j}$ with $i$ odd and $j=i+\ell$ even.  Let $0< i_1<i_2<
\cdots <i_t<n$ be the odd subscripted vertices of the jump edges in $C$.  Add two columns whose vertices
are labelled conventionally.  Replace the jump edge $u_{0,i_r}u_{0,i_r+\ell}$ with the jump edge
$u_{2,i_r}u_{0,i_r+\ell}$ and add the flat edge $u_{0,i_r}u_{1,i_r}$ for each $i_1,i_2,\ldots,i_t$.
Now use vertical fills between columns 1 and 2 to obtain a Hamilton cycle in $\mathrm{HTG}(3,n,\ell)$
completing the proof.    \end{proof}

\medskip

The second question is not yet settled and we state it as a research problem.

\smallskip

{\bf Research Problem 1}.  Is every $\mathrm{HTG}(m,n,\ell)$ Hamilton-laceable?

\smallskip

Some comments about the preceding problem are in order.  It is a signifcant problem because an affirmative
answer implies that the family of connected Cayley graphs of valency at least 3 on generalized dihedral
groups satisfies the conclusions of the Chen - Quimpo Theorem.  A special conclusion from this,
of course, is that every connected Cayley graph on a dihedral group is hamiltonian.  The fact that the
latter conclusion still is unsettled is a frustrating situation.

There has been some progress on Research Problem 1.  In \cite{A2} it is proved that $\mathrm{HTG}(m,n,\ell)$
is Hamilton-laceable whenever $m$ is even.  This leaves the case that $m$ is odd.  A few special cases
for $m=1$ are solved in \cite{A5}.  The following result
is due to McGuinness \cite{M1}. His manuscipt contains a long proof and was not published.  Consequently,
we provide a short proof here for convenience.

\begin{thm} If $\mathrm{HTG}(1,n,\ell)$ is Hamilton-laceable, then $\mathrm{HTG}(m,n,\ell)$ is Hamilton-laceable
for all odd $m\geq 1$.
\end{thm}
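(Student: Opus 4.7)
The plan is to induct on odd $m \geq 1$. The base case $m = 1$ is the hypothesis. For the inductive step, I would show that Hamilton-laceability of $\mathrm{HTG}(m, n, \ell)$ implies Hamilton-laceability of $\mathrm{HTG}(m+2, n, \ell)$. The inductive construction adapts the flat-edge subdivision plus vertical-fill technique from Theorem~\ref{ham} from Hamilton cycles to Hamilton paths with prescribed endpoints.

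For the inductive step, given vertices $x$ and $y$ in opposite parts of the bipartition of $\mathrm{HTG}(m+2, n, \ell)$, I would first exploit the cyclic column-shift symmetry to select two adjacent columns of the larger graph to serve as the ``inserted pair'' of a decomposition $\mathrm{HTG}(m+2, n, \ell) \supset \mathrm{HTG}(m, n, \ell)$. When $m \geq 3$ there are at least five cyclic columns and at most two of them are occupied by $x$ or $y$, so a pigeonhole argument yields two adjacent empty columns; declare that pair to be the inserted columns. The remaining $m$ columns, together with the jump edges, are then regarded as a copy of $\mathrm{HTG}(m, n, \ell)$, and since the column index is shifted by an even amount $x$ and $y$ still lie in opposite parts of its bipartition. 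By the inductive hypothesis there is a Hamilton path $P$ from $x$ to $y$ in this smaller copy. Using the remaining freedom among empty adjacent pairs and cyclic relabeling, I would arrange that the insertion seam coincides with a cut of the smaller graph crossed by at least one flat edge of $P$, and then subdivide such a flat edge and apply vertical fills on the two inserted columns exactly as in the proof of Theorem~\ref{ham}; the result is a Hamilton path from $x$ to $y$ in $\mathrm{HTG}(m+2, n, \ell)$.

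The main obstacle is the first inductive step, $m = 1 \to m+2 = 3$. Here $\mathrm{HTG}(3, n, \ell)$ has only three cyclic columns, so when $x$ and $y$ lie in distinct columns no two adjacent columns are empty and the pigeonhole argument above fails; moreover $\mathrm{HTG}(1, n, \ell)$ has no flat edges at all, so the subdivision technique itself needs replacement. A dedicated construction is required, closely paralleling the single-column case of Theorem~\ref{ham}'s proof: use Hamilton-laceability of $\mathrm{HTG}(1, n, \ell)$ together with an analogue of Smith's theorem to produce a Hamilton path in the base column whose use of jump edges is sufficient, and then replace each such jump edge by a detour through the two non-base columns, combining flat edges with vertical fills, so as to cover all $3n$ vertices and terminate at $x$ and $y$. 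The resulting case analysis over the possible positions of $x$ and $y$ among the three columns is the most delicate step of the argument.
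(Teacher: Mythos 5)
Your outline matches the paper's: an induction in steps of two on the number of columns, with the passage from one column to three singled out as the crux. But at that crux you stop at a description of what a construction should achieve rather than giving one, and the ideas you omit are exactly the ones that make the step work. The paper first uses vertex-transitivity to anchor one endpoint at $u_{0,0}$, so the ``case analysis over the possible positions of $x$ and $y$ among the three columns'' that you defer reduces to locating a single target vertex in column 0, 1 or 2. For a target $u_{0,j}$ it takes a Hamilton path $P'$ in $\mathrm{HTG}(1,n,\ell)$ from $u_{0,0}$ to $u_{0,j}$ containing a jump edge (automatic unless $j\in\{1,n-1\}$, where Smith's theorem supplies a second Hamilton cycle through $u_{0,0}u_{0,1}$ and hence such a path), replaces each jump edge $u_{0,i}u_{0,i+\ell}$ of $P'$ by $u_{2,i}u_{0,i+\ell}$, adds flat edges $u_{0,i}u_{1,i}$ at the displaced ends, and absorbs columns 1 and 2 by vertical fills. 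The decisive observation is that the same input yields, with one extra flat edge, a Hamilton path to $u_{2,j}$ as well, so columns 0 and 2 are handled simultaneously; the remaining targets $u_{1,j}$ are then reached not by a further construction but by transporting these paths with two explicit automorphisms of $\mathrm{HTG}(3,n,\ell)$. Your sketch gestures at the jump-edge detour and at Smith's theorem, but without the anchoring of one endpoint, the double-target observation, and the automorphisms, the deferred case analysis is the substance of the theorem and remains unproved.

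There is also a gap in your inductive step for $m\ge 3$. Unlike a Hamilton cycle, a Hamilton path need not cross a prescribed flat seam: the forcing argument (a column vertex whose flat edge at the seam is unused must take both its vertical edges, so the entire column cycle is forced into the path) breaks down when an endpoint of the path lies in one of the two columns bordering the seam. Your pigeonhole only guarantees an empty adjacent pair of inserted columns, and both columns flanking that pair can be occupied by $x$ and $y$; in $\mathrm{HTG}(5,n,\ell)$ with $x$ in column 0 and $y$ in column 2 the unique empty adjacent pair is $\{3,4\}$, flanked by columns 2 and 0. In that situation nothing you have said guarantees a flat edge of $P$ at the seam to subdivide, so this subcase needs its own argument, for instance rerouting jump edges through the inserted columns as in the one-to-three construction.
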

\begin{proof} Using the same method as in the proof of Theorem \ref{ham}, it is easy to show that
if $\mathrm{HTG}(3,n,\ell)$ is Hamilton-laceable, then $\mathrm{HTG}(m,n,\ell)$ is Hamilton-laceable
for all odd $m\geq 3$.  This reduces the proof to showing that $\mathrm{HTG}(1,n,\ell)$ being
Hamilton-laceable implies that $\mathrm{HTG}(3,n,\ell)$ is Hamilton-laceable.

Assume that $\mathrm{HTG}(1,n,\ell)$ is Hamilton-laceable.  Let $P'$ be a Hamilton path in
$\mathrm{HTG}(1,n,\ell)$ from $u_{0,0}$ to $u_{0,j}$ using at least one jump edge.
Because $\mathrm{HTG}(1,n,\ell)$ is bipartite, $j$ must be odd and the subscripts of the
end vertices of jump edges have opposite parity.

Project $P'$ into the edge set of $\mathrm{HTG}(3,n,\ell)$ as follows.  If $u_{0,j}u_{0,j+1}$
is an edge of $P'$, where the subscripts are treated modulo $n$, then $u_{0,j}u_{0,j+1}$
is an edge of the projection in $\mathrm{HTG}(3,n,\ell)$.  If $u_{0,j}u_{0,k}$ is a jump
edge in $P'$ with $j$ odd and $k$ even, then $u_{2,j}u_{0,k}$ is an edge in the projection
in $\mathrm{HTG}(3,n,\ell)$. 

Let $u_{2,j_1},u_{2,j_2},\ldots,u_{2,j_t}$ be the end vertices in column 2 of the projected jump edges,
where $0<j_1<j_2<\cdots<j_t<n$.  Now add the flat edges $u_{0,j_a}u_{1,j_a}$ for $a=1,2,\dots,t$. 
Vertical fills between columns 1 and 2 yield a Hamilton path from $u_{0,0}$ to $u_{0,i}$.
Furthermore, if we also add the flat edge $u_{0,j}u_{1,j}$ and then do the vertical fills between
columns 1 and 2, we obtain a Hamilton path from $u_{0,0}$ to $u_{2,j}$. 

From the preceding, we see that whenever there is a Hamilton path from $u_{0,0}$ to $u_{0,j}$
in $\mathrm{HTG}(1,n,\ell)$ using at least one jump edge, then there are Hamilton paths from
$u_{0,0}$ to both $u_{0,j}$ and $u_{2,j}$ in $\mathrm{HTG}(3,n,\ell)$.  So the presence
of jump edges is crucial.

A Hamilton path in $\mathrm{HTG}(1,n,\ell)$ from $u_{0,0}$ to $u_{0,j}$ must use jump
edges if $j$ is neither 1 nor $n-1$. Because $u_{0,0}u_{0,1}\cdots u_{0,n-1}u_{0,0}$ is a
Hamilton cycle in $\mathrm{HTG}(1,n,\ell)$, there is another Hamilton cycle $C$, by Smith's
Theorem \cite{T1}, using the edge $u_{0,0}u_{0,1}$.  Clearly $C$ must have at least one jump
edge.  The same argument applies to the edge $u_{0,0}u_{0,n-1}$ 
Therefore, for each $u_{0,j}$, $j$ odd, there is a Hamilton path in $\mathrm{HTG}(3,n,\ell)$
from $u_{0,0}$ to both $u_{0,j}$ and $u_{2,j}$.

We now obtain a Hamilton from $u_{0,0}$ to any vertex of the form $u_{1,j}$, $j$
even, because both of the following permutations are automorphisms of $\mathrm{HTG}(3,n,\ell)$:
\begin{itemize}
\item $f(u_{i,j})=u_{i,j+2}$; and
\item $g(u_{i,j})=u_{1+1,j+1}$ for $i\in\{0,1\}$ and $g(u_{2,j})=u_{0,1+j+\ell}$.
\end{itemize}

\noindent Therefore, $\mathrm{HTG}(3,n,\ell)$ is Hamilton-laceable.     \end{proof}

\section{Cycle Structure}

We now look at cycles in honeycomb toroidal graphs with respect to two properties: girth and
cycle spectrum.  Throughout this section we use the important convention that the notation
$\mathrm{HTG}(m,n,\ell)$ always is in {\it normal form}, that is, $\ell\leq n/2$.  This
convention is possible because $\mathrm{HTG}(m,n,\ell)$ is isomorphic to $\mathrm{HTG}
(m,n,n-\ell)$.  Hence, the information given with respect to $\ell$ assumes $n\geq 2\ell$.

There are no odd length cycles because honeycomb toroidal graphs are bipartite.  All
$\mathrm{HTG}(m,n,\ell)$ contain 6-cycles ($K_4$ is not a honeycomb toroidal graph)
implying that the girth is either 4 or 6.  The next result handles the girth situation and is given
without its easy proof.

\begin{thm}\label{girth} The girth of $\mathrm{HTG}(m,n,\ell)$ is 6 with the following
exceptions for which the girth is 4:
\begin{itemize}
\item $n=4$;
\item $m=1, n>4$ and $\ell=3$;
\item $m=1,n>4,n\equiv 2(\mbox{\em mod }4)$ and $\ell=n/2$;
\item $m=1,n>4,n\equiv 0(\mbox{\em mod }4)$ and $\ell=\frac{n-2}{2}$; and
\item $m=2,n>4$ and $\ell\in\{0,2\}$. 
\end{itemize}
\end{thm}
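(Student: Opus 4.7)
The plan is as follows. $\mathrm{HTG}(m,n,\ell)$ is bipartite---either from the Cayley graph description (every generator is a reflection, so the two cosets of the rotation subgroup form a bipartition of the Cayley graph) or directly from the $(i+j) \bmod 2$ coloring of the vertex set. Consequently every cycle has even length, so the girth is $4$ or $6$. Since $6$-cycles are always present (from a hexagonal face of the torus embedding when $m\ge 2$, or from a pair of jump edges together with four vertical edges when $m=1$), the theorem reduces to deciding precisely when a $4$-cycle exists.

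I would classify every putative $4$-cycle $C$ by the triple $(v,f,j)$ that counts its vertical, flat, and jump edges ($v+f+j=4$), and impose the two closing equations around $C$: the column-displacement sum vanishes modulo $m$ and the row-displacement sum vanishes modulo $n$.  A vertical edge contributes $(0,\pm 1)$, a flat contributes $(\pm 1,0)$, and a jump contributes $(\pm 1,\pm\ell)$.  On top of these numerical constraints sit the edge-existence conditions: each vertex is incident to at most one flat edge and at most one jump edge, a flat $u_{i,j}u_{i+1,j}$ exists only when $i+j$ is odd, and a jump $u_{m-1,j}u_{0,j+\ell}$ exists only when $j$ has the same parity as $m$.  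A $4$-cycle must satisfy all of these simultaneously.

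From here the argument is finite casework on $(v,f,j)$.  The case $v=4$ forces $n=4$ and supplies a column $4$-cycle.  The cases with $v=0$ are dispatched by a pigeonhole on vertex-endpoints (three or more edges of the same flat-or-jump type need at least six endpoints of that type, but only four are available), with the sole survivor $(0,2,2)$ failing because the two column-$0$ jump endpoints have even $j$ while any adjacent flat would need odd $j$.  The cases with $v=1$ fail because their column-sum equations admit a solution only for $m\in\{1,3\}$, and each sub-case is killed by either the absence of flats (for $m=1$) or a parity clash between the flat's and jump's required $j$-parities (for $m=3$).  Case $(3,0,1)$ is feasible only for $m=1$ and yields $\ell\equiv\pm 3\pmod{n}$, i.e., $\ell=3$ in normal form, witnessed by the $4$-cycle $u_{0,1}u_{0,2}u_{0,3}u_{0,4}u_{0,1}$ closed through the jump $u_{0,1}u_{0,4}$.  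Case $(2,2,0)$ fails on a parity clash between the two flats (equal $j$-parities) and the two verticals (differing $j$-parities).  Case $(2,1,1)$ reduces via the column equation to $m=2$ and then, via the row equation, to $\ell\in\{0,2\}$, witnessed by the $4$-cycle $u_{1,0}u_{0,\ell}u_{0,1}u_{1,1}u_{1,0}$.  Case $(2,0,2)$ survives only for $m=1$, where it reduces to $2\ell\equiv\pm 2\pmod{n}$, giving $\ell=n/2$ when $n\equiv 2\pmod{4}$ and $\ell=(n-2)/2$ when $n\equiv 0\pmod{4}$, with an explicit $4$-cycle on two jump edges and two verticals in each sub-case.

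The principal difficulty is not conceptual but clerical: the flat-existence and jump-existence parity rules must be tracked consistently at every vertex of every candidate configuration, because a single oversight either conjures a $4$-cycle that is not in the graph or suppresses a legitimate exception.  Once the bookkeeping is complete, the configurations that realize a $4$-cycle are precisely those listed in the statement, and in each exceptional case a single explicit $4$-cycle finishes the proof.
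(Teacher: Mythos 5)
The paper does not actually prove this theorem --- it is explicitly ``given without its easy proof'' --- so there is nothing to compare your argument against except the statement itself; your reconstruction is essentially the intended casework and it arrives at exactly the list in the theorem. Two remarks. First, much of your low-$v$ analysis can be collapsed: every vertex of $\mathrm{HTG}(m,n,\ell)$ is incident with exactly two vertical edges and exactly one non-vertical edge (flat or jump), so the non-vertical edges of the graph are pairwise vertex-disjoint; a $4$-cycle therefore contains at most two of them, and if two then they are opposite edges of the cycle. This kills every profile with $v\le 1$ at a stroke (no column-sum equations or $m\in\{1,3\}$ subcases needed) and also forces the ``one vertical in each of the two relevant columns'' structure that you exploit in the cases $(2,2,0)$, $(2,1,1)$ and $(2,0,2)$. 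Second, in the case $(2,0,2)$ your closing congruence $2\ell\equiv\pm 2\pmod{n}$ is incomplete as written: it accounts only for the two vertical steps taken in the same direction, whereas taking them in opposite directions yields $2\ell\equiv 0\pmod{n}$, which is precisely the source of the exception $\ell=n/2$ with $n\equiv 2\pmod 4$ that you nevertheless (correctly) report. The congruence should read $2\ell\equiv 0$ or $\pm 2\pmod{n}$; with that repair, and with the routine witness of a $6$-cycle in every non-exceptional case (two flats and four verticals for $m\ge 2$, two jumps and four verticals for $m=1$), the argument is complete and matches the theorem.
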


We now consider the cycle spectrum property.
Recall that a graph is {\it even pancyclic} if it contains all possible even
length cycles from length 4 through $2\lfloor n/2\rfloor$, where $n$ is the order of the graph.
Given that connected bipartite Cayley graphs of valency at least 3 on abelian groups are even
pancyclic \cite{A3} and honeycomb toroidal graphs are Cayley graphs on groups that are close
to being abelian, we expect that the latter graphs should have a rich cycle spectrum.  

Cycles whose lengths are congruent to 2 modulo 4 are straightforward as the following
easily proved result indicates. 

\begin{thm}\label{four} The graph $\mathrm{HTG}(m,n,\ell)$ has cycles of length $L$ for all $L$
satisfying $L\equiv 2(\mbox{{\em mod }}4)$ and $6\leq L\leq mn$.
\end{thm}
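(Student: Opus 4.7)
My plan is to construct, for each $L = 4k + 2$ with $6 \leq L \leq mn$, a cycle of length $L$ as the boundary of a union of $k$ hexagonal faces in the natural toroidal embedding of $\mathrm{HTG}(m,n,\ell)$. In this embedding there are $mn/2$ hexagonal faces, and two faces meeting along an edge share only the two endpoints of that edge, so if $\mathcal{S}$ is a collection of $k$ hex faces whose dual-induced subgraph is a tree and whose union is a topological disk in the torus, then the boundary of $\bigcup\mathcal{S}$ is a single simple cycle of length $6k - 2(k-1) = 4k+2$. It thus suffices to exhibit, for each $k \in \{1, 2, \ldots, \lfloor (mn-2)/4 \rfloor\}$, such a collection $\mathcal{S}$.

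For the small values $1 \leq k \leq n/2-1$ I would take $\mathcal{S}$ to be the vertical stack of $k$ consecutive hexagonal faces between columns $0$ and $1$, bounded by flat edges at $j = 1, 3, \ldots, 2k+1$; the dual graph of $\mathcal{S}$ is a path, the region is plainly a disk, and the boundary is a cycle of length $4k+2 \in\{6, 10, \ldots, 2n-2\}$. For larger $k$ I would grow this stack into a staircase: once $n/2-1$ faces between columns $0$ and $1$ have been used, I continue by appending faces between columns $1$ and $2$ (each sharing a single vertical edge in column $1$ with the current region), then between columns $2$ and $3$, and so on, adding one face at a time and snaking through the fundamental rectangle of the embedding. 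Every such addition keeps the dual a tree and raises the boundary length by exactly $4$, so walking $k$ through its allowed range we hit every required length in turn.

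The main obstacle is the topological disk condition: making sure the staircase never wraps around an essential loop of the torus and never pinches together at a vertex shared by two non-adjacent faces of $\mathcal{S}$. Since we only ever need $k < mn/4$, i.e., strictly fewer than half of the $mn/2$ available faces, a short counting argument shows there is ample room to lay the staircase out inside the fundamental rectangle: the construction uses at most $n/2-1$ faces in each successive column-pair before moving to the next, and it touches fewer than $m$ column-pairs in total, so the union cannot close up around either meridian or longitude of the torus. Finally, the extremal case $L = mn$, which only arises when $mn \equiv 2 \pmod{4}$, corresponds to a Hamilton cycle and is handled directly by Theorem~\ref{ham}.
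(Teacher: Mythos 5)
Your overall strategy (boundaries of disk-like unions of hexagonal faces) is a reasonable instinct, and the paper itself offers no proof to compare against -- it states the result as ``easily proved.'' But your argument has a genuine gap at its foundation: the claim that two faces meeting along an edge share only the two endpoints of that edge is false for these embeddings. The honeycomb toroidal maps are not polyhedral in general (indeed Theorem \ref{girth} shows the girth is often $4$, strictly less than the face size $6$). Concretely, in $\mathrm{HTG}(1,n,3)$ the two hexagonal faces on either side of the jump edge $u_3u_6$ are $u_1u_2u_3u_6u_5u_4u_1$ and $u_3u_4u_5u_8u_7u_6u_3$; they share the jump edge \emph{and} the vertical edge $u_4u_5$, so the boundary of their union is two disjoint $4$-cycles rather than a $10$-cycle. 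Your length formula $6k-2(k-1)$ silently assumes exactly one shared edge per dual adjacency, and that assumption fails here.

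Two further concrete failures. First, the case $m=1$ is not addressed: there are no flat edges and no ``columns $0$ and $1$,'' and the natural adaptation (stacking consecutive faces along jump edges) self-intersects as soon as the stack is taller than $\ell$. In $\mathrm{HTG}(1,n,5)$ the faces $F_1=u_1u_2u_3u_8u_7u_6u_1$ and $F_3$ give a $10$-cycle, but $F_5=u_5u_6u_7u_{12}u_{11}u_{10}u_5$ shares the edge $u_6u_7$ with $F_1$ in addition to the jump edge $u_5u_{10}$ with $F_3$, so the three-face union has only $12$ vertices and cannot yield the required $14$-cycle; the theorem is still true there, but your construction does not prove it. Second, for $m\geq 2$ the staircase order matters and the one you describe fails: after the strip between columns $0$ and $1$ is filled, a face between columns $1$ and $2$ at height $[j,j+2]$ contains \emph{two} vertical edges of column $1$, and for all but two choices of $j$ both already lie in the region, so the boundary grows by $2$ (or, once a neighbouring strip-$1$ face is present, by $0$), not by $4$. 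Your counting argument that fewer than half the faces are used rules out wrapping around the torus, but it does not rule out these local multiple-edge collisions, which is where the real work lies. A correct proof along these lines needs either a carefully specified shelling order that maintains the single-shared-edge property at every step, or (more in the spirit of what the author likely intends) a direct construction of the cycles, e.g.\ ladders $u_{0,1}\cdots u_{0,2k+1}u_{1,2k+1}\cdots u_{1,1}u_{0,1}$ for $L\leq 2n-2$ extended across further columns for larger $L$, with the $m=1$ case argued separately using jump edges.
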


From Theorems \ref{girth} and \ref{four}, we see that cycles whose lengths are multiples of 4
are of interest.  
Honeycomb toroidal graphs $\mathrm{HTG}(m,n,\ell)$ for $m\geq 3$ and $n>4$ have a simple 12-cycle
lying in three columns so that lengths 4 and 8 become the only possible missing values once it
is seen how to increase cycle lengths by 4 at a time.  The cycle spectrum problem was settled
for $\mathrm{HTG}(m,n,\ell)$, when $m\geq 3$, in \cite{S2}.  We summarize their results in
Table 1.  Any of the graphs not listed in the table are even pancyclic.  The missing even cycle
lengths from 4 through $mn-2$ are displayed in the right column.

This leaves the spectrum problem unsettled for $m=1$ and $m=2$.  The honeycomb toroidal
graphs for $m=1$ were seen to be crucial for the Hamilton laceability question so that they
are an interesting subclass.  As a side note, $\mathrm{HTG}(1,14,5)$ is the Heawood graph
so that the subclass contains well-known graphs.

\begin{center}
\begin{tabular}{|c|c|}
The graphs & Missing cycle lengths $L$\\ \hline
$\mathrm{HTG}(m,4,\ell),\mbox{even }m\geq 6$ & $L\equiv 0(\mbox{mod } 4)\mbox{ and }
4<L<2m$\\ \hline
$\mathrm{HTG}(m,4,\ell),\mbox{odd }m\geq 5$ & $L\equiv 0(\mbox{mod } 4)\mbox{ and }
4<L<2m+2$\\ \hline
$\mathrm{HTG}(m,n,\ell),m\geq 3,n=6,8$ & $L=4$\\ \hline
$\mathrm{HTG}(3,n,\ell),n\geq 10$ & $L=4$\\
$\ell\in\{\pm 1,\pm 3,\pm 5\}$ & \\ \hline
$\mathrm{HTG}(4,n,\ell),n\geq 10$ & $L=4$\\
$\ell\in\{0,\pm 2,\pm 4\}$ & \\ \hline
$\mathrm{HTG}(4,n,\ell),n\geq 10$ & $L=4,8$\\
$\ell\not\in\{0,\pm 2,\pm 4\}$ & \\ \hline
$\mathrm{HTG}(m,n,\ell),\mbox{even }m\geq6,n\geq 10$ & $L=4,8$\\ \hline
$\mathrm{HTG}(3,n,\ell),n\geq 10$ & $L=4,8$\\
$\ell\not\in\{\pm 1,\pm 3,\pm 5\}$ & \\ \hline
$\mathrm{HTG}(m,n,\ell),\mbox{odd }m\geq 5,n\geq 10$ & $L=4,8$\\ \hline
\end{tabular} 

\medskip

{\sc Table 1}
\end{center}

When $m=1$ it is easy to check that $\mathrm{HTG}(1,n,3)$ is even pancyclic.  For $\ell=5$,
$\mathrm{HTG}(1,n,5)$ is missing only a 4-cycle for even $n\geq 14$.   For $\ell=7$,
$\mathrm{HTG}(1,n,7)$ is missing only a 4-cycle for even $n\geq 18$.

For convenience we use only a single subscript describing the vertices when $m=1$.  The jump
edges then have the form $u_iu_{i+\ell}$ for all odd $i$, where the subscript arithmetic is
carried out modulo $n$.
For odd $\ell>7$, we have $n\geq 2\ell$ because $\mathrm{HTG}(1,n,\ell)$ is in normal
form.  If $n=2\ell$, then $\mathrm{HTG}(1,2\ell,\ell)$ is also a circulant graph and by the
main result of \cite{A3}, it is even pancyclic.  If $n=2\ell+2$, then $\mathrm{HTG}(1,2\ell+2,\ell)$
has girth 4 by Theorem \ref{girth}.  However, it may be missing an 8-cycle because $\ell>7$.
Finally, if $n>2\ell+2$, then $\mathrm{HTG}(1,n,\ell)$ contains the 12-cycle
\[u_0,u_1,u_2,u_{\ell+2},u_{\ell+3},u_{2\ell+3},u_{2\ell+2},u_{2\ell+1},u_{2\ell},u_{2\ell-1},
u_{\ell-1},u_{\ell},u_0.\]  Using methods from \cite{S2}, it is not hard to show that there are
cycles of all lengths that are multiples of 4 lying between 12 and $n$ inclusive.  This means the
only possible missing cycle lengths are 4 and 8.

Theorem \ref{girth} provides the information on 4-cycles and it is easy to see they all are
even pancyclic.  So 8-cycles turn out to be of the most interest.  The calculations used to
determine the $\mathrm{HTG}(1,n,\ell)$ which contain an 8-cycle are tedious and we give
two examples to show how it is done.  It is based on looking at the subpaths of the cycle
$C=u_0,u_1,u_2,\ldots,u_{n-1},u_0$.  We also may assume that the edge $u_1,u_{1+\ell}$
belongs to any 8-cycle because of rotational automorphisms of the graph.

If an 8-cycle has just a single subpath of $C$, then $\ell=7$ must hold and we are working
with $\ell>7$ so that there must be at least two subpaths.  If there are two subpaths, one
possibility is that one path has length 1 and the other has length 5.  There are several
subcases one of which is $$u_0,u_1,u_{1+\ell},u_{\ell},u_{2\ell},u_{2\ell-1},u_{2\ell-2},
u_{2\ell-3},u_0.$$  This implies that $\ell=(n+3)/3$.

The information for $m=1$ is contained in Table 2.  We move to the case of $m=2$.
From Theorem \ref{girth}, $\mathrm{HTG}(2,n,\ell)$ has girth 4 exactly when $\ell=0$
or 2, and in these cases they are easily seen to be even pancyclic.  For other values of
$\ell$, $$u_{0,1},u_{0,2},u_{0,3},u_{1,3},u_{1,4},u_{0,4+\ell},u_{0,3+\ell},u_{0,2+\ell},
u_{0,1+\ell},u_{0,\ell},u_{1,0},u_{1,1},u_{0,1}$$ is a 12-cycle.  It is straightforward to
verify that all other cycle lengths that are multiples of 4 and between 16 and $2n$, inclusive,
are realized.  So 8-cycles again become the key to fully determining the cycle spectrum.

It is trivial to find an 8-cycle when $n=6$ or $n=8$, so that we need consider only values
of $n\geq 10$.  An 8-cycle must intersect each  of the two column cycles in the same
number of subpaths.  By considering the possible intersections, 8-cycles occur as shown in
Table 2.  This table differs from Table 1 in that any honeycomb toroidal graph not mentioned
in the table for $m=1$ or $m=2$ is missing both 4-cycles and 8-cycles and no others of even
length in the feasible range.

\section{Paths And Diameter} 

The {\it diameter} of a connected graph is the maximum distance between pairs of distinct vertices
in the graph.  This parameter is of interest to anyone concerned with the propagation
of information throughout a network.  As this involves distances between vertices, we are
interested in shortest paths in honeycomb toroidal graphs.   The next two lemmas provide
useful information about shortest paths in honeycomb toroidal graphs.  Some terminology
is necessary before stating them.

\bigskip

\begin{center}
\begin{tabular}{|c|c|}
The graphs & Missing cycle lengths $L$\\ \hline
$\mathrm{HTG}(1,n,3), n\geq 6$ & none\\ \hline
$\mathrm{HTG}(1,n,n/2), n\equiv 2(\mbox{mod }4)$ & none\\ \hline
$\mathrm{HTG}(1,n,(n-2)/2), n\equiv 0(\mbox{mod }4)$ & none\\ \hline
$\mathrm{HTG}(2,n,\ell), \ell\in\{0,2\}$ & none\\ \hline
$\mathrm{HTG}(1,n,5), n\geq 14$ & $L=4$\\ \hline
$\mathrm{HTG}(1,n,7), n>14$ & $L=4$\\ \hline
$\mathrm{HTG}(1,n,\ell), n\equiv 2(\mbox{mod }4), n>14$ & $L=4$\\
odd $\ell\in\{(n-4)/2,(n-2)/4,(n+2)/4\}$ & \\ \hline
$\mathrm{HTG}(1,n,\ell), n\equiv 0(\mbox{mod }4), n>16$ & $L=4$\\ 
 odd $\ell\in\{(n-6)/2,(n-4)/4,n/4,(n+4)/4\}$ & \\ \hline
$\mathrm{HTG}(1,n,(n\pm 3)/3), n\equiv 0(\mbox{mod }6), n>18$ & $L=4$\\ \hline
$\mathrm{HTG}(1,n,\ell), n\equiv 4(\mbox{mod }6), n>10$ & $L=4$\\ 
$\ell\in\{(n-1)/3,(n+5)/3\}$ & \\ \hline
$\mathrm{HTG}(1,n,\ell), n\equiv 2(\mbox{mod }6), n>20$ & $L=4$\\
$\ell\in\{(n-5)/3,(n+1)/3\}$ & \\ \hline
$\mathrm{HTG}(2,n,4), n\geq 8$ & $L=4$\\ \hline
$\mathrm{HTG}(2,n,(n-4)/2), n\equiv 0(\mbox{mod }4), n>8$ & $L=4$\\ \hline
$\mathrm{HTG}(2,n,(n-2)/2), n\equiv 2(\mbox{mod }4), n>6$ & $L=4$\\ \hline

\end{tabular}

\bigskip

{\sc Table 2}
\end{center}

When talking about directions in which edges are traversed, travelling along a flat edge from
column $i$ to column $i+1$ is one direction and travelling from column $i+1$ to column $i$
is the other direction.  Similarly, the two directions for jump edges are from column 0 to
column $m-1$ and vice versa.

\begin{lemma}\label{LA} Every jump edge in a shortest path in $\mathrm{HTG}(m,n,\ell)$
is traversed in the same direction.
\end{lemma}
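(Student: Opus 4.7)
The plan is a direct shortcut-and-contradict argument. Suppose, for contradiction, that $P$ is a shortest path in $\mathrm{HTG}(m,n,\ell)$ that contains jump edges traversed in both directions. Listing the jump edges of $P$ in the order they appear along $P$, the direction sequence is non-constant, so there exist two jump edges $e$ and $e'$ that are consecutive in this list and have opposite directions. In particular, the subpath $Q$ of $P$ lying strictly between $e$ and $e'$ contains no jump edge.

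By symmetry I may assume $e$ is a forward jump from $u_{m-1,a}$ to $u_{0,a+\ell}$ and $e'$ is a backward jump from $u_{0,b}$ to $u_{m-1,b-\ell}$. Then $Q$ is a path from $u_{0,a+\ell}$ to $u_{0,b}$ using only flat and vertical edges, so it lies in the ``strip subgraph'' on columns $0,1,\ldots,m-1$ obtained by deleting all jump edges. Projecting the edges of $Q$ onto the second coordinate, flat edges contribute $0$ and vertical edges contribute $\pm 1$; since the net second-coordinate change must equal $b-a-\ell$ modulo $n$, the number of vertical edges in $Q$ is at least $d_n(a+\ell,b)$, where $d_n(x,y) := \min(|x-y|,\,n-|x-y|)$ denotes cyclic distance modulo $n$. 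Hence $|Q| \geq d_n(a+\ell,b)$.

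Now replace the segment $e,Q,e'$ of $P$ by a path $Q'$ from $u_{m-1,a}$ to $u_{m-1,b-\ell}$ consisting solely of vertical edges in column $m-1$, chosen of minimum length $d_n(a,b-\ell)$; no parity restriction obstructs this since every vertical edge exists. Because $d_n(a,b-\ell) = d_n(a+\ell,b)$, the resulting walk from the start of $P$ to the end of $P$ has length
\[
|P| - (|Q|+2) + |Q'| \;\leq\; |P|-2,
\]
and the standard reduction of a walk to a path with the same endpoints keeps this length at most $|P|-2$, contradicting the minimality of $|P|$. The remaining case ($e$ backward and $e'$ forward) is handled symmetrically, with the shortcut running along column $0$ instead.

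The main point requiring care is the matching of endpoint displacements between $Q$ and its replacement $Q'$. The $+\ell$ shift introduced by $e$ and the $-\ell$ shift introduced by $e'$ cancel, so the cyclic distance $a \to b-\ell$ along column $m-1$ equals the cyclic distance $a+\ell \to b$ along column $0$; this is exactly what lets the shortcut have length matching the lower bound on $|Q|$. If $e$ and $e'$ were in the same direction the two shifts would add rather than cancel and the vertical shortcut would lengthen rather than match, which is, in essence, why the lemma holds.
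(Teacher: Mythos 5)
Your proof is correct and follows essentially the same shortcut-and-contradict strategy as the paper: isolate two consecutive jump edges traversed in opposite directions, observe that the $+\ell$ and $-\ell$ shifts cancel, and replace the intervening segment (plus the two jump edges) by a purely vertical path in a single column, contradicting minimality. Your version is marginally tidier in that it bounds $|Q|$ below by the cyclic distance and substitutes a geodesic vertical path, whereas the paper projects the actual vertical moves of the intermediate subpath onto one column and then notes the resulting walk may repeat edges; both then invoke the walk-to-path reduction.
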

\begin{proof}  If a shortest path contains no jump edge, there is nothing to prove so let
$P$ be a shortest path in $\mathrm{HTG}(m,n,\ell)$ containing a jump edge.  Suppose
the first jump edge encountered when traversing $P$ is $u_{0,j}u_{m-1,j-\ell}$, that is,
we traverse it from column 0 to column $m-1$.  Suppose the next jump edge encountered
along $P$ has the form $u_{m-1,k}u_{0,k+\ell}$, that is, it is traversed from column $m-1$
to column 0. 

This implies that the subpath $P'$ of $P$ from $u_{m-1,j-\ell}$ to $u_{m-1,k}$ has no jump
edges and the second subscript has
changed from $j-\ell$ to $k$.  This is done only by vertical edges in various columns.
The change from $j$ to $k+\ell$ is the same as the change from $j-\ell$ to $k$.  Hence, we
may delete the subpath from $u_{0,j}$ to $u_{0,k+\ell}$ and replace it with the vertical
changes in $P'$ translated by $\ell$ projected onto column 0 .  This gives us a shorter walk
(some edges may be duplicated via the projection) from $u_{0,0}$ to the terminal vertex of $P$.
This is a contradiction to $P$ being a shortest path.

A similar contradiction arises if the traversals of two consecutive jump edges are reversed.
Therefore, all the jump edges in a shortest path go from column 0 to column $m-1$ or
vice versa.    \end{proof}

\begin{lemma}\label{LB} Let a shortest path $P$ in $\mathrm{HTG}(m,n,\ell)$ have jump edges.
if there are flat edges between the same two columns in $P$, they must be separated by a jump
edge.  In particular, if $P$ has no jump edges, then there is at most one flat edge between
two columns in $P$.
\end{lemma}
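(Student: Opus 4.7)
The plan is to argue by contradiction, in the style of the proof of Lemma~\ref{LA}. Suppose a shortest path $P$ contains two flat edges $e_1,e_2$ between the same two columns $i$ and $i+1$, with no jump edge of $P$ separating them. I would take $e_1,e_2$ to be consecutive among such flat edges of $P$, so that no flat edge between columns $i$ and $i+1$ lies strictly between them either. Write $e_1=u_{i,a}u_{i+1,a}$ and $e_2=u_{i,b}u_{i+1,b}$ with $a\neq b$, and let $P'$ be the subpath of $P$ strictly between $e_1$ and $e_2$.

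The pivotal observation is that $P'$ cannot cross the boundary between columns $i$ and $i+1$: the only edges doing so are flat edges between those two columns (excluded by the choice of $e_1,e_2$) or jump edges (excluded by assumption, since wrapping from columns $\geq i+1$ to columns $\leq i$ via column $m-1$ and column $0$ requires a jump edge). Consequently the two endpoints of $P'$ lie on the same side of this boundary; equivalently $e_1$ and $e_2$ are traversed in opposite directions along $P$, and either both endpoints of $P'$ lie in column $i+1$ or both lie in column $i$.

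Consider the former case; the latter is symmetric. Each vertical edge of $P'$ changes the second subscript by $\pm 1$ while flat edges leave it unchanged, so the number of vertical edges in $P'$ is at least $d:=\min(|a-b|,n-|a-b|)$, giving $|P'|\geq d$. Now replace the subpath $e_1P'e_2$ of length $|P'|+2\geq d+2$ by the direct vertical arc of length $d$ in column $i$ between $u_{i,a}$ and $u_{i,b}$. The result is a walk with the same endpoints as $P$ and strictly shorter length, and any such walk contains a path of at most the same length, contradicting the shortestness of $P$. The ``in particular'' clause follows immediately: with no jump edges in $P$, the separation conclusion forces at most one flat edge of $P$ between any pair of columns.

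The main obstacle is the first step --- verifying that the absence of jump edges, together with the choice of consecutive flat edges, really does confine $P'$ to a single side of the $i/(i+1)$ boundary. This depends on the global cyclic structure of the honeycomb toroidal graph, in which column $0$ and column $m-1$ are linked only through jump edges, so it must be argued with a little care. Once this confinement is secured, the counting and replacement step is short.
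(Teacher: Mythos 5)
Your proposal is correct and follows essentially the same route as the paper: both argue by contradiction, shortcutting the detour between two unseparated flat edges by a vertical path in a single column to produce a strictly shorter walk with the same endpoints. Your version is slightly more explicit than the paper's in observing that, absent jump edges, the subpath between the two flat edges is confined to one side of the column boundary (forcing opposite traversal directions), a point the paper's ``projection'' phrasing passes over quickly, but the underlying replacement argument is the same.
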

\begin{proof}  Let $u_{i,j}u_{i+1,j}$ and $u_{i,k}u_{i+1,k}$ be succesive appearances of
flat edges between columns $i$ and $i+1$, $0\leq i<m-1$.  Suppose that the first edge is
traversed from $u_{i,j}$ to $u_{i+1,j}$.  If there is no jump edge between $u_{i,j}u_{i+1,j}$
and the edge $u_{i,k}u_{i+1,k}$, then there are vertical edges taking the second subscript
from $j$ to $k$ no matter which direction $u_{i,k}u_{i+1,k}$ is traversed.  

In either case, remove the
subpath of $P$ from $u_{i,j}$ to $u_{i,k}$ and replace it with the projection of the vertical
edges onto column $i$.  This yields a shorter walk with the same terminal vertices which
is a contradiction.   Similar arguments work if the edge $u_{i,j}u_{i+1,j}$ is traversed in
the opposite direction.  The conclusion follows from this.   \end{proof}

\medskip

Consider the special graph $\mathrm{HTG}(m,n,0)$.  If we are looking for a shortest path
from $u_{0,0}$ to $u_{i,j}$, it is clear that we need vertical edges taking us to row $j$
and flat edges (the jump edge is also flat in this case) taking us to column $i$.  So if $i\leq m/2$,
we use flat edges in the direction left to right, and if $i>m/2$, we take a jump edge from
column 0 to column $m-1$ followed by flat edges from right to left.  We use vertical edges
as required to reach row $j$.  It is straightforward to obtain the diameter as shown in Table 3.

The preceding worked easily because the jump edges change the second subscript by zero.
Other values for the jump edges allow for big changes in shortest paths because a large
jump edge value allows large changes in the second subscript.  For example, suppose we
are trying to increase the second subscript as much as possible.  We can start a path at
$u_{0,0}$ and reach the vertex $u_{m-1,m-1}$ when we first reach column $m-1$.
We follow this with the edge $u_{m-1,m-1}u_{m-1,m}$ and then the jump edge
$u_{m-1,m}u_{0,m+\ell}$.  

We now have a path from $u_{0,0}$ to $u_{0,m+\ell}$ of length $2m$.  If instead we took
the path from $u_{0,0}$ to $u_{0,m+\ell}$ up column 0, it has length $m+\ell$.  Thus, if
$\ell>m$, we have a shorter path by using a jump edge.  Lemma \ref{LA} provides some
help because it tells us that if we use more than one jump edge in a shortest path, we must
use them in the same direction which forces many edges to be used between their appearances.

\smallskip

{\bf Research Problem 2}.  Determine the shortest paths between vertices in an arbitrary
$\mathrm{HTG}(m,n,\ell)$.

\smallskip

The diameters of a few honeycomb toroidal graphs
have been determined in \cite{S1,Y1,Y3} and we summarize their results in the following
table.  Note that \cite{Y1} corrects an error for the diameter of $\mathrm{HTG}(m,2m,m)$
given in \cite{S1}.
\bigskip

\begin{center}
\begin{tabular}{|c|c|}
The graphs & diameter\\ \hline
$\mathrm{HTG}(m,6m,3m)$ & $2m$\\ \hline
$\mathrm{HTG}(m,2m,m), m\geq 2, m\equiv 1,4(\mbox{mod }6)$ & $\lfloor 4m/3\rfloor$\\ \hline
$\mathrm{HTG}(m,2m,m), m\geq 2, m\equiv 0,2,3,5(\mbox{mod }6)$ & $\lceil 4m/3\rceil$\\ \hline
$\mathrm{HTG}(m,n,0), m\mbox{ even }, m\geq n-2$ & $m$\\ \hline
$\mathrm{HTG}(m,n,0), m\mbox{ even }, m<n-2$ & $(n+m)/2$\\ \hline
$\mathrm{HTG}(m,n,\ell), m\geq n/2,\ell\equiv n-m(\mbox{mod }n)$ & max$\{m,\lfloor(2m+n+1)/3\rfloor$\\ \hline
\end{tabular}

\bigskip

{\sc Table 3}
\end{center}

{\bf Research Problem 3}.  Determine the diameter of
$\mathrm{HTG}(m,n,\ell)$ in terms of the parameters $m,n\mbox{ and }\ell$.

\smallskip

The preceding problem undoubtedly has many subcases as the value of the jump varies.
Lemmas \ref{LA} and \ref{LB} allow us to determine that the diameter of
$\mathrm{HTG}(1,n,\ell)$ is $2\lfloor n/\ell\rfloor+1$ whenever
$\ell\leq\sqrt{n}$.  We shall not present the tedious proof of this fact, but mention it
just to indicate the kinds of complications that likely arise in considering the preceding
problem. 

\section{Automorphisms}

Honeycomb toroidal graphs are Cayley graphs \cite{A4} on a generalized dihedral group.
This means they are vertex-transitive.  As mentioned earlier, $\mathrm{HTG}(1,14,5)$ is the
Heawood graph and its automorphism group has order 336 in spite of the graph having only
14 vertices.  On the other hand, the automorphism group of $\mathrm{HTG}(1,14,3)$ has
order only 28.  So we see there may be wide variations in the automorphism groups of these
graphs.  This suggests the next problem.

\smallskip

{\bf Research Problem 4}.  Determine the automorphism group of an arbitrary
$\mathrm{HTG}(m,n,\ell)$ in terms of the parameters $m,n\mbox{ and }\ell$.

\smallskip
 
Given a family of Cayley graphs, there is interest in determining those with minimal
automorphism groups.  In this case that means those that are GRRs, that is, those
for which $|\mathrm{Aut}(\mathrm{HTG}(m,n,\ell)|=mn$. 

\smallskip

{\bf Research Problem 5}.  Determine when $\mathrm{HTG}(m,n,\ell)$ is a GRR, that is,
$|\mathrm{Aut}(\mathrm{HTG}(m,n,\ell))|=mn$.

\smallskip

Little is known about the preceding question.  One result in this direction comes from
\cite{C3} in which the following result is proved.

\begin{thm} The graph $\mathrm{HTG}(1,n,\ell)$ in normal form is a $\mathrm{GRR}$ if and only if
$n\geq 18, \ell<n/2$ and the following all hold:
\begin{itemize}
\item $(\ell+1)^2/4\not\equiv 1(\mbox{\em{mod} }n/2)$;
\item $(\ell-1)^2/4\not\equiv 1(\mbox{\em{mod} }n/2)$; and
\item $(\ell^2-1)/4\not\equiv -1(\mbox{\em{mod} }n/2)$.
\end{itemize}
\end{thm}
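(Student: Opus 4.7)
The plan is to realize $\mathrm{HTG}(1,n,\ell)$ as an explicit Cayley graph on $G:=D_{n/2}=\langle\rho,\tau\mid\rho^{n/2}=\tau^{2}=1,\ \tau\rho\tau=\rho^{-1}\rangle$, reduce the GRR question to a computation of the setwise stabiliser $\mathrm{Aut}(G,S):=\{\alpha\in\mathrm{Aut}(G):\alpha(S)=S\}$, and then argue that for $n\geq 18$ no extra graph automorphism can inflate the stabiliser $A_{e}$ of the identity in $A:=\mathrm{Aut}(\mathrm{Cay}(G;S))$. Following Section 2 with $m=1$ and labelling $u_{2k}:=\rho^{-k}$, $u_{2k+1}:=\rho^{-k}\tau$, the column edges are generated by $\tau$ and $\rho\tau$, and a short calculation shows each jump edge $u_{i}u_{i+\ell}$ (with $i$ and $\ell$ odd) corresponds to the reflection $\rho^{c}\tau$ with $c:=(\ell+1)/2$. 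Hence $\mathrm{HTG}(1,n,\ell)\cong\mathrm{Cay}(G;S)$ with $S=\{\tau,\rho\tau,\rho^{c}\tau\}$.

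Every element of $\mathrm{Aut}(G,S)$ lifts to a graph automorphism fixing $e$, so $\mathrm{Aut}(G,S)\hookrightarrow A_{e}$ and a necessary condition for a GRR is $\mathrm{Aut}(G,S)=1$. Every $\alpha\in\mathrm{Aut}(G)$ has the form $\sigma_{a,b}:\rho\mapsto\rho^{a},\ \tau\mapsto\rho^{b}\tau$ with $\gcd(a,n/2)=1$, and $\sigma_{a,b}(S)=S$ iff $\{b,a+b,ac+b\}=\{0,1,c\}\pmod{n/2}$. Enumerating the six permutations of $\{0,1,c\}$ yields the identity plus five non-trivial possibilities, which collapse to the four congruences
\[
c^{2}\equiv 1,\qquad (c-1)^{2}\equiv 1,\qquad c^{2}-c+1\equiv 0,\qquad 2c\equiv 1\pmod{n/2}.
\]
Substituting $c=(\ell+1)/2$ turns the first three into precisely the three congruences in the theorem, while $2c\equiv 1\pmod{n/2}$ becomes $\ell\equiv 0\pmod{n/2}$, which is excluded by $0<\ell<n/2$. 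Hence failure of any of the three displayed conditions produces a non-identity element of $A_{e}$, preventing a GRR; conversely, when all three hold one has $\mathrm{Aut}(G,S)=1$.

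The main obstacle is the reverse direction: showing that when $n\geq 18$, $\ell<n/2$ and the three congruences all hold, $A_{e}$ is actually trivial (and not merely trivial on the image of $\mathrm{Aut}(G,S)$). My approach is to fix a hypothetical $\varphi\in A_{e}$ inducing a non-trivial permutation of the three neighbours $\{\tau,\rho\tau,\rho^{c}\tau\}$ and propagate it outward one sphere about $e$ at a time, using the fact that Theorem~\ref{girth} guarantees girth $6$ for all $(n,\ell)$ in the relevant range: each $2$-arc through $e$ then lies on a unique $6$-cycle, which pins down $\varphi$ on successive neighbourhoods, and tracing the forced action around the column cycle eventually loops back and produces one of the four congruences above, contradicting the hypothesis. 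The threshold $n\geq 18$ is exactly what excludes the small cases where either the girth drops to $4$ or accidental coincidences occur --- most famously $\mathrm{HTG}(1,14,5)$, which is the Heawood graph and carries the full $S_{3}$-action on each neighbourhood. A cleaner alternative, if available, is to invoke a normality result for trivalent Cayley graphs on dihedral groups to conclude directly that $A=G_{L}\rtimes\mathrm{Aut}(G,S)$ under the theorem's hypotheses, from which the equivalence follows immediately by combining with the preceding paragraph.
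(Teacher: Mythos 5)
First, note that the paper does not prove this theorem at all: it is quoted from Coxeter--Frucht--Powers \cite{C3}, so there is no in-paper argument to compare yours against. Judged on its own terms, the first half of your proposal is correct and worthwhile: the identification $\mathrm{HTG}(1,n,\ell)\cong\mathrm{Cay}(D_{n/2};\{\tau,\rho\tau,\rho^{c}\tau\})$ with $c=(\ell+1)/2$ checks out, the enumeration of $\sigma_{a,b}\in\mathrm{Aut}(D_{n/2})$ with $\{b,a+b,ac+b\}=\{0,1,c\}$ is right, and the resulting congruences $c^{2}\equiv 1$, $(c-1)^{2}\equiv 1$, $c^{2}-c+1\equiv 0$, $2c\equiv 1\pmod{n/2}$ do translate exactly into the three displayed conditions plus the exclusion $\ell=n/2$. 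Since $\mathrm{Aut}(G,S)$ embeds in the stabiliser $A_{e}$, this cleanly proves that violating any of the three congruence conditions (or having $\ell=n/2$) prevents a GRR.

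The rest has genuine gaps. (1) The necessity of $n\geq 18$ is not established: there are small cases, e.g.\ $\mathrm{HTG}(1,12,5)$ (here $c=3$, $n/2=6$, and none of $c^{2}\equiv1$, $(c-1)^{2}\equiv1$, $c^{2}-c+1\equiv0$ holds), where $\mathrm{Aut}(G,S)=1$ yet the theorem asserts the graph is not a GRR; your argument gives no mechanism for these, so the finitely many $n<18$ cases must be disposed of separately. (2) Your plan for the hard direction rests on a false premise. You claim Theorem \ref{girth} gives girth $6$ throughout the relevant range, but $\mathrm{HTG}(1,n,(n-2)/2)$ with $n\equiv 0\pmod 4$ has girth $4$, and for instance $n=20$, $\ell=9$ satisfies every hypothesis of the theorem ($n\geq 18$, $\ell<n/2$, $c=5$ and none of the three congruences holds modulo $10$), so it must be shown to be a GRR even though the ``unique $6$-cycle through each $2$-arc'' propagation cannot even get started there. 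Even in the girth-$6$ cases, the uniqueness of the $6$-cycle through a $2$-arc and the claim that the propagation closes up into one of the four congruences are asserted, not proved. (3) The proposed shortcut via a normality theorem for trivalent Cayley graphs on dihedral groups is not available: $\mathrm{HTG}(1,14,5)$ is the Heawood graph, a non-normal Cayley graph on $D_{7}$, so no such blanket result can hold; any normality statement would itself need the hypotheses of the theorem and essentially amounts to the missing argument.
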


\section{Conclusion}

The family of graphs under discussion are of interest for several reasons and we have looked at
them primarily from a graph theoretic viewpoint.  There has been considerable work done on
algorithmic aspects of honeycomb toroidal graphs.  Most of the concern is with routing,
broadcasting, bisection width, semigroup computation and cost.  Again, most of the work has
dealt with the special honeycomb toroidal graphs introduced in \cite{S1} and their extensions.
So there is room for research for the entire family of honeycomb toroidal graphs.  There is
background for the algorithmic work in \cite{C5,C4,L2,M2,S3,S1}.

We also have presented some specific research problems that we find interesting.  This is a
family of graphs worthy of much further investigation.

\end{document}